\documentclass[11pt]{article}
\usepackage{latexsym,color,amsmath,amsthm,amssymb,amscd,amsfonts,mathtools}

\setlength{\textwidth}{6.0in} \setlength{\evensidemargin}{0.25in}
\setlength{\oddsidemargin}{0.25in} \setlength{\textheight}{9.0in}
\setlength{\topmargin}{-0.5in} \setlength{\parskip}{2mm}
\setlength{\baselineskip}{1.7\baselineskip}
\linespread{1.5}
\usepackage[a4paper, left=1in, right=1in, top=1in, bottom=1in]{geometry}

%\usepackage{fontspec}
%\setmainfont{Arial}

\newtheorem{lemma}{Lemma}

\newtheorem{fact}[lemma]{Fact}
\newtheorem{theorem}[lemma]{Theorem}
\newtheorem{definition}[lemma]{Definition}

\newtheorem{lem}[lemma]{Lemma}
\newtheorem{prop}[lemma]{Proposition}
\newtheorem{cor}[lemma]{Corollary}

\newtheorem{rem}[lemma]{Remark}

\newtheorem*{remark*}{Remark}

\def\S{{\cal S}}

\def\qed{\hfill $\vcenter{\hrule height .3mm
		\hbox {\vrule width .3mm height 2.1mm \kern 2mm \vrule width .3mm
			height 2.1mm} \hrule height .3mm}$ \bigskip}

\def\to{\rightarrow}

\newcommand{\iprod}[2]{\left\langle #1,#2 \right\rangle} % Inner product %

\def\RR{\mathbb{R}}

%********** CALLIGRAPHIC **************************

%***************************************************

\def\K{{\cal K}}

\DeclarePairedDelimiter{\parens}()

\title{Isometries of the class of ball-bodies}
\author{Shiri Artstein-Avidan, Arnon Chor, Dan Florentin \thanks{The first and second named authors are supported in part by the ERC under the European Union’s Horizon 2020 research and innovation programme (grant agreement no. 770127), by ISF grant Number 784/20, and by the Binational Science Foundation (grant no. 2020329).}}
\date{\today}

\begin{document}
\baselineskip=17pt

\maketitle 

\renewcommand{\thefootnote}{}

\footnote{2020 \emph{Mathematics Subject Classification}: Primary 52A20; Secondary 51F99.}

\footnote{\emph{Key words and phrases}: Isometries, Ball-bodies, Hausdorff metric.}

\renewcommand{\thefootnote}{\arabic{footnote}}
\setcounter{footnote}{0}

\vspace{-45pt}

\begin{abstract}
	We characterize the surjective isometries, with respect to the Hausdorff distance, of the class of bodies given by intersections of Euclidean unit balls. We show that any such isometry is given by the composition of a rigid motion with either the identity or the $c$-duality mapping, which maps a body in this class to the intersection of Euclidean unit balls centered at points in the body. 
\end{abstract}
%\vskip -35pt 
Let ${\K}^n$ stand for the set of all convex bodies (i.e. compact, convex, non-empty sets) in $\RR^n$, which is equipped with the Euclidean inner product $\iprod{\cdot}{\cdot}$ and induced norm $\|\cdot\|_2$. Let $\delta$ stand for Hausdorff distance on ${\K}^n$, given by 
\[
\delta(K_0, K_1) = \inf \{\lambda>0: K_0 \subseteq K_1 + \lambda B_2^n, K_1 \subseteq K_0 + \lambda B_2^n\},\]
where $B_2^n$ is the Euclidean unit ball.

The class $\S_n\subseteq {\K}^n$ consists  of  convex bodies which are intersections of unit Euclidean balls in $\RR^n$. It can also be characterized at the class of summands of $B_2^n$, namely bodies $K\in {\K}^n$ for which there exists $L\in {\K}^n$ with $K + L = B_2^n$.
This class is relevant to the study of various open problems in convex analysis, such as the Kneser-Poulsen conjecture (see \cite{bezdek2008kneser}), the Blaschke-Lebesgue problem (see \cite{martini2019bodies}), isoperimetric-type inequalities (see \cite{drach2023reverse}), and more, see \cite{AF-perrint} for additional references.
In \cite{AF-perrint}, we studied this class intensively, in particular noting that up to rigid motions, there is a unique order reversing bijection defined on this set, mapping a body $K\in \S_n$ to the unique $L\in \S_n$ with $K-L = B_2^n$. We denote this $L$ by $K^c$ and call this mapping the $c$-duality on $\S_n$. In terms of support functions (defined as $h_K(u)=\sup_{x\in K}\iprod{x}{u}$ for $u \in \RR^n$), this means $h_{K^c} (u) = 1- h_K(-u)$. In particular, the $c$-duality mapping is an isometry with respect to the Hausdorff metric, which can be realized as the $L^\infty$ norm on the support functions defined on the unit sphere $\partial B_2^n$. We note that for any $K \in \S_n$, $K^{cc} = K$. In geometric terms one can explicitly write this duality as 
\[ K^c = \bigcap_{x\in K} (x+B_2^n).\]

In this note we show that 
up to rigid motions,  there are precisely two bijective isometries on the class $\S_n$, one is the identity and the other is this $c$-duality $K\mapsto K^c$. 

\begin{theorem}\label{thm:main}
	Let $T:\S_n\to \S_n$ be a bijective isometry with respect to the Hausdorff distance. Then there exists a rigid motion $g:\RR^n \to \RR^n$ such that either $TK = gK$ for all $K\in \S_n$ or $TK = gK^c$ for all $K\in \S_n$. 
\end{theorem}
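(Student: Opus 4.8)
The plan is to reduce, by composing $T$ with a rigid motion and possibly with the $c$-duality, to the case where $T$ fixes every singleton $\{x\}$, and then to prove that such a $T$ is the identity.

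\emph{The finishing step.} For $\{x\}\in\S_n$ and $K\in\S_n$, the Hausdorff distance from a point to a set gives $\delta(\{x\},K)=R_K(x):=\max_{y\in K}\|x-y\|_2$; and since every $K\in\S_n$ lies in a translate of $B_2^n$, we have $K^c=\bigcap_{y\in K}(y+B_2^n)=\{x\in\RR^n:R_K(x)\le 1\}\ne\varnothing$. Hence if $T$ fixes every singleton, then for all $K$ and $x$ one gets $R_{TK}(x)=\delta(\{x\},TK)=\delta(\{x\},K)=R_K(x)$, so $(TK)^c=\{R_{TK}\le 1\}=\{R_K\le 1\}=K^c$, and applying $c$ (a bijection of $\S_n$ with $c\circ c=\mathrm{id}$) gives $TK=K$. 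Since moreover $(gK)^c=g(K^c)$ for every rigid motion $g$, once the reduction is achieved the original $T$ is either $g$ or $g\circ c$, which are the two asserted forms.

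\emph{The reduction, and the main obstacle.} The heart of the matter is to describe the set $\Sigma:=\{\{x\}:x\in\RR^n\}\cup\{x+B_2^n:x\in\RR^n\}$ intrinsically. Writing $w_K(u)=h_K(u)+h_K(-u)$ for the width, one has $w_{K^c}=2-w_K$, hence $h_K-h_{K^c}=w_K-1$ and $\delta(K,K^c)=\max_{\|u\|=1}|w_K(u)-1|=\max(\diam K,\diam K^c)-1\le 1$, with equality precisely when $\diam K=2$ or $\diam K^c=2$; and a body of $\S_n$ of diameter $2$ (equivalently, of circumradius $1$) must be a unit ball, since it contains a diameter of one of its defining unit balls. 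Thus $\Sigma=\{K:\delta(K,K^c)=1\}$, but this still refers to $c$. To make it purely metric I would use ``flats'': each translation class $F_K:=\{K+x:x\in\RR^n\}$ is an isometric copy of Euclidean $\RR^n$, and — since the nontranslational shape parameters of bodies of $\S_n$ are bounded — conversely every isometric copy of $\RR^n$ in $\S_n$ is such an $F_K$. Then $T$ permutes $\mathcal F=\{F_K\}$, which carries the quotient metric $d(F_K,F_L)=\inf_z\delta(K+z,L)$. Here $d(\mathrm{Sing},F_K)$ equals the circumradius of $K$, so $d(\mathrm{Sing},\cdot)\le 1$; the relation $d(F_K,\mathrm{Sing})+d(\mathrm{Sing},F_L)=d(F_K,F_L)$ with both terms positive, after centering support functions at circumcenters, forces $K$ or $L$ to be a point, so $\mathrm{Sing}=F_{\{0\}}$ is an extreme point of $(\mathcal F,d)$, and so is $\mathrm{Ball}=F_{B_2^n}$ because $c$ is an isometry interchanging them; while every other flat $F_K$ lies strictly between $\mathrm{Sing}$ and $F_{K+\varepsilon B_2^n}$ for small $\varepsilon>0$ (since the circumradius of $K+\varepsilon B_2^n$ is $\varepsilon$ more than that of $K$, and $\inf_z\delta(K+z,K+\varepsilon B_2^n)=\varepsilon$), hence is not extreme. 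So $\mathrm{Sing}$ and $\mathrm{Ball}$ are exactly the extreme points of $(\mathcal F,d)$.

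\emph{Finishing the reduction.} Consequently $T$ maps $\mathrm{Sing}$ onto $\mathrm{Sing}$ or onto $\mathrm{Ball}$; replacing $T$ by $c\circ T$ if needed we may assume $T(\mathrm{Sing})=\mathrm{Sing}$, so $T$ restricts to a surjective isometry of $\mathrm{Sing}\cong(\RR^n,\|\cdot\|_2)$, i.e. to a rigid motion $g$; replacing $T$ by $g^{-1}\circ T$ we may assume $T$ fixes every singleton, and then $T=\mathrm{id}$ by the finishing step. The main obstacle is the flat step: proving that an isometric copy of $\RR^n$ inside $\S_n$ is a translation class, and carrying out the extreme-point analysis of $(\mathcal F,d)$ rigorously; the remaining ingredients (the Mazur--Ulam-type rigidity of isometries of Euclidean $\RR^n$, the width identity, and the radius-function computation) are routine. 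In low dimension, notably $n=1$ where $\S_1$ is essentially a strip with an $\ell^1$-type metric and copies of $\RR$ can bend, one expects to need a separate direct argument.
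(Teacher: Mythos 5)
Your ``finishing step'' is fine and is essentially the paper's own endgame (reconstructing $K$, equivalently $K^c$, from the distances $\delta(\{x\},K)=\max_{y\in K}\|x-y\|_2$, after reducing to a map fixing all singletons). The gap is in the reduction, i.e.\ precisely in the claim that $T$ maps singletons to singletons or to unit balls --- which is the hard part of the theorem. Your route through ``flats'' hinges on the assertion that every isometric copy of $\RR^n$ in $(\S_n,\delta)$ is a translation class $F_K$, justified only by the remark that the non-translational shape parameters are bounded. Boundedness does not rule out bounded \emph{bending}: since $\delta$ is a sup-type metric (the $L^\infty$ norm on support functions), isometric copies of Euclidean space can wobble in the shape directions while the distance is carried by the translation part. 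Indeed you concede this happens for $n=1$ (where $t\mapsto [t,t+f(t)]$ with $f$ non-increasing and $2$-Lipschitz is an isometric, non-translational copy of $\RR$), and the same mechanism is not excluded by anything you say for $n\ge 2$; no proof is offered, and without it $T$ is not known to permute the family $\{F_K\}$, so the whole extreme-point analysis never gets off the ground.

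There is a second concrete error inside that analysis even granting the flat-rigidity claim: to show that a flat $F_K$ with $K$ neither a point nor a unit ball is not extreme, you place it between $\mathrm{Sing}$ and $F_{K+\eps B_2^n}$. But $K+\eps B_2^n$ need not lie in $\S_n$: one checks that $K+\eps B_2^n\in\S_n$ iff $K$ is an intersection of balls of radius $1-\eps$, which fails for any $K\in\S_n$ whose boundary contains a piece of a unit sphere --- e.g.\ every proper lens, exactly the bodies that matter here. (For some such $K$ one can find other witnesses, e.g.\ a thin lens lies metrically between $\mathrm{Sing}$ and a slightly fatter coaxial lens, but this has to be argued case by case and you do not do it.) So both pillars of the reduction --- flat rigidity and the identification of $\mathrm{Sing}$ and $\mathrm{Ball}$ as the only extreme flats --- are unproved, and the first is genuinely doubtful as stated. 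For comparison, the paper gets the ``points go to points or to balls'' step by a different device: it shows that pairs of points and pairs of unit balls at distance $\ge 4$ are exactly the pairs in $\S_n$ admitting a \emph{unique} Hausdorff midpoint in $\S_n$ (via the lens analysis of Lemmas~\ref{lem:av-is-lens}--\ref{lem:nocuteles}), a property manifestly preserved by any surjective isometry; if you want to salvage your approach, you would need to supply a full proof of the flat-rigidity statement for $n\ge 2$, a corrected non-extremeness argument covering all $K\in\S_n$, and the separate $n=1$ argument you yourself flag.
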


We rely heavily on the ideas from the paper of Schneider \cite{Schneider1975} where he showed that the only surjective isometries of ${\K}^n$ with respect to $\delta$ are those given by rigid motions. 
We mention also the work of Gruber and Lettl \cite{GruberLettl1980}  who showed that without assuming surjectivity, an isometry $F$ of $\K^n$ takes the form  $F(K)=gK+L$ for some rigid motion $g$ and some $L \in \K^n$. 

We   make use of the following easily verifiable fact, which  holds since elements of $\S_n$ are intersections of unit balls.
\begin{fact}\label{fact:unique}
    Let $K \in \S_n$ and $u \in \partial B_2^n$. Then there exists a unique point $x\in \partial K$ such that $u$ is a normal vector to $K$ at $x$.
\end{fact}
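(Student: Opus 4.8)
The plan is to reformulate the statement in terms of the faces of $K$ and then reduce the uniqueness to the strict convexity of $B_2^n$. First I would record that a unit vector $u$ is a normal vector to $K$ at a point $x\in\partial K$ precisely when the hyperplane through $x$ with outer normal $u$ supports $K$, that is, when $\iprod{x}{u} = h_K(u)$. Hence the set of points at which $u$ is a normal vector is exactly the face
\[
F(K,u) := \{x\in K : \iprod{x}{u} = h_K(u)\},
\]
and the assertion becomes the statement that $F(K,u)$ is a single point. Existence (nonemptiness of $F(K,u)$) is immediate, since $K$ is compact and so the supremum defining $h_K(u)$ is attained; this part uses nothing special about $\S_n$.

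For uniqueness I would exploit the defining property of $\S_n$ that $K$ is a summand of $B_2^n$, so that there is $L\in\K^n$ with $K+L = B_2^n$. Passing to support functions gives $h_K + h_L = h_{B_2^n}$, and since the face of a Minkowski sum in a fixed direction is the Minkowski sum of the faces, this yields $F(K,u) + F(L,u) = F(B_2^n,u)$. The Euclidean ball is strictly convex, so $F(B_2^n,u)$ is the single point $u$. A Minkowski sum of two nonempty sets can equal a singleton only if each summand is itself a singleton (if $A+B=\{p\}$, then for fixed $b_0\in B$ every $a\in A$ satisfies $a=p-b_0$). Therefore $F(K,u)$ is a single point, which is exactly the claimed uniqueness.

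The only point requiring care — and the main, albeit mild, obstacle — is justifying the face identity $F(K+L,u)=F(K,u)+F(L,u)$ and the reduction uniformly for an arbitrary element of $\S_n$, which may be an intersection of infinitely many balls. This is precisely why I route the argument through the summand characterization rather than through a direct ``a boundary segment would have to lie on some sphere'' argument: the latter is transparent for finite intersections but requires extra topological bookkeeping when infinitely many balls are involved, whereas the support-function/face computation is clean and dimension- and cardinality-free. I would close by recording the underlying geometric content, namely that every $K\in\S_n$ is strictly convex (its boundary contains no segment), a property inherited directly from the strict convexity of $B_2^n$.
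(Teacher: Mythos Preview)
Your argument is correct. The face identity $F(K+L,u)=F(K,u)+F(L,u)$ is a standard fact (it follows immediately from $h_{K+L}=h_K+h_L$ together with the observation that equality in $\iprod{x+y}{u}\le h_K(u)+h_L(u)$ forces equality in each summand), and once it is in hand your singleton argument is clean.

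The paper itself does not supply a proof; it simply declares the fact ``easily verifiable'' because elements of $\S_n$ are intersections of unit balls. The implicit argument behind that remark runs through the \emph{intersection} description rather than the \emph{summand} description you use: if $x\neq y$ both lie in $F(K,u)$, then the midpoint $m=\tfrac{x+y}{2}$ is a boundary point of $K=\bigcap_{z\in K^c}(z+B_2^n)$, and compactness of $K^c$ produces some $z$ with $\|m-z\|=1$; but then $x,y\in z+B_2^n$ with $x\neq y$ forces $\|m-z\|<1$ by strict convexity of the Euclidean ball, a contradiction. Your route via $K+L=B_2^n$ and the additivity of faces is arguably tidier, since it sidesteps the small compactness/limiting step needed to locate a ball on whose boundary the midpoint sits, and it makes the inheritance of strict convexity from $B_2^n$ to its summands completely transparent. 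Both approaches ultimately rest on the same geometric fact (strict convexity of $B_2^n$), just accessed through the two equivalent characterizations of $\S_n$.
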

 
 We also use the immediate fact that for $A,B,C\in \K^n$ and $\lambda \in [0,1]$,
\[ (1-\lambda) \delta (A,B)  = \delta ((1-\lambda)A+\lambda C, (1-\lambda) B+\lambda C).\]
(We use averages since if $A,B,C$ belong to $\S_n$ then so do their averages.)
In particular we obtain with the above formula that given $K_0,K_1\in \K^n$, the ``line'' $\lambda\mapsto K_\lambda =(1-\lambda)K_0 + \lambda K_1$ for $\lambda \in [0,1]$ is a geodesic curve, namely 
\[ 
\delta (K_0, K_\lambda) + \delta (K_\lambda, K_1 ) = \delta (K_0, K_1).
\]

An isometry maps geodesics to geodesics, which would have been extremely  helpful had it been the case that these weighted Minkowski averages $(K_\lambda)_{\lambda \in [0,1]}$ are the only geodesics between a pair of bodies. However, we can easily find $K_0, K_1$ and some $T\neq \frac{K_0 + K_1}{2}$ with
\[
    \delta (K_0, T) = \delta (K_1, T) = \frac12 \delta (K_0, K_1).
\]
For example consider $K_0 = \frac12 B_2^n$ and $K_1 = x + \frac12 B_2^n$ for some $x\in \RR^n$ with $\|x\|_2 = 1$. Clearly $\delta (K_0, K_1) = 1$. Any $T$ which includes the segment $[0, x]$ and is included in the intersection $B_2^n \cap (x + B_2^n)$, is in distance $\frac12$ to both $K_0$ and $K_1$. 
Since there are many such bodies, we see that  the ``midpoint'' is not unique. Having fixed one such point, we can complete it to a geodesic using the aforementioned averages construction, i.e. take its weighted averages with both endpoints. Since Hausdorff distance is sub-additive we get many different geodesics. 
In this example we find many midpoints even in $\S_n$, not just in $\K_n$; any body lying between the so-called ``$1$-lens'' $[0,x]^{cc}$ and the intersection $B_2^n \cap (x + B_2^n)$. 
However, 
had we started with $1$-balls instead of $\frac12$-balls, there would only be one midpoint within $\S_n$, although many in $\K^n$, as we shall see below. This gives a hint on our method of proof - detecting those pairs of bodies which admit unique geodesics. 

\begin{definition}
    A pair $(K_0,K_1) \in \S_n \times \S_n$  is called $\S_n$-cute if  $M = \frac{K_0+K_1}{2} \in \S_n$ is the unique body in $\S_n$ satisfying 	
	\[ \delta (K_0, M) = \frac12 \delta (K_0, K_1) \qquad {\rm and}\qquad\delta (K_1, M) =\frac12 \delta (K_0, K_1).\]
\end{definition}

\begin{lem}\label{lem:cuteness-and-isometry}
	Let $T: \S_n \to \S_n$  
	be a bijective isometry with respect to the Hausdorff distance and let $(K_0, K_1)$ 
be an $\S_n$-cute pair. Then so is the pair $(TK_0, TK_1)$. Moreover, 
	\[ T\parens*{\frac12 K_0 + \frac12 K_1}  = \frac12 (T( K_0) +T( K_1)). \]
\end{lem}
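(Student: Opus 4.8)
The plan is to exploit the only non-trivial content of the hypothesis --- that the set of ``$\S_n$-midpoints'' of a cute pair is a singleton --- together with the fact that a bijective isometry permutes such sets. First I would introduce, for any pair $(A,B) \in \S_n \times \S_n$, the set
\[ \mathrm{Mid}(A,B) := \left\{ N \in \S_n : \delta(A,N) = \delta(B,N) = \tfrac12 \delta(A,B) \right\}, \]
so that, by definition, $(K_0,K_1)$ being $\S_n$-cute means exactly that $\mathrm{Mid}(K_0,K_1) = \{M\}$ with $M = \frac12 K_0 + \frac12 K_1$. Note that $\mathrm{Mid}(A,B)$ is always non-empty: the average $\frac12 A + \frac12 B$ belongs to $\S_n$ and, by the geodesic identity recalled above, lies in $\mathrm{Mid}(A,B)$.

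Next I would observe that $\mathrm{Mid}$ is natural with respect to bijective isometries. Since $T$ is surjective and distance-preserving it is in particular injective, hence a bijection of $\S_n$, and $T^{-1}$ is again a bijective isometry. For $N \in \S_n$ and a pair $(A,B)$, applying $T$ (resp.\ $T^{-1}$) to the three bodies $A,B,N$ and using $\delta(TX,TY) = \delta(X,Y)$ shows that $N \in \mathrm{Mid}(A,B)$ if and only if $TN \in \mathrm{Mid}(TA,TB)$. Therefore $\mathrm{Mid}(TA,TB) = T\bigl(\mathrm{Mid}(A,B)\bigr)$; in particular $T$ carries a one-element midpoint set to a one-element midpoint set.

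Applying this with $(A,B) = (K_0,K_1)$ gives $\mathrm{Mid}(TK_0, TK_1) = T(\{M\}) = \{TM\}$, so $(TK_0, TK_1)$ has a unique $\S_n$-midpoint, namely $TM$. It remains only to identify this unique midpoint with the Minkowski average. But $\frac12 TK_0 + \frac12 TK_1 \in \S_n$ and, again by the geodesic identity, $\frac12 TK_0 + \frac12 TK_1 \in \mathrm{Mid}(TK_0,TK_1) = \{TM\}$; hence $\frac12 TK_0 + \frac12 TK_1 = TM = T\bigl(\frac12 K_0 + \frac12 K_1\bigr)$. This yields simultaneously that $(TK_0, TK_1)$ is $\S_n$-cute (its unique $\S_n$-midpoint is the Minkowski average) and the displayed identity.

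The argument is essentially formal. The only places where the hypotheses genuinely enter are that $T$ is invertible on all of $\S_n$ --- so surjectivity is used, not merely the isometry property --- and that $\S_n$ is closed under taking averages, which is what makes the Minkowski midpoint an admissible competitor inside $\S_n$ and hence forces it to coincide with $TM$. I do not expect any substantial obstacle in this lemma itself; the real difficulty in the paper lies in producing a sufficiently rich supply of $\S_n$-cute pairs to pin down $T$ afterwards.
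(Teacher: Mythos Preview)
Your argument is correct and is essentially the same as the paper's: both use that a bijective isometry carries the set of $\S_n$-midpoints of $(K_0,K_1)$ bijectively onto that of $(TK_0,TK_1)$, so a singleton midpoint set is preserved, and then identify the unique midpoint with the Minkowski average since $\S_n$ is closed under averages. You have simply written out the details (introducing $\mathrm{Mid}(A,B)$ and invoking $T^{-1}$ explicitly) that the paper leaves implicit.
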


\begin{proof}
	Let $T: \S_n \to \S_n$ be a bijective isometry with respect to the Hausdorff distance and let $(K_0,K_1)$ be an $\S_n$-cute pair. Since $T$ is surjective, a midpoint of $TK_0$ and $TK_1$ must be the image of a midpoint of $K_0$ and $K_1$, and since only one such midpoint exists for the latter, the same is true for the former. For the ``moreover'' we note that the Minkowski average of two bodies (with equal weights) is always a midpoint of the two with respect to Hausdorff distance. 
\end{proof}

\begin{lem}\label{lem:geod1}
	For two points $x,y\in \RR^n$, the pair $(\{x\}, \{y\})$ is an $\S_n$-cute pair, and the pair $(x+B_2^n,  y+B_2^n)$ is an $\S_n$-cute pair. 
\end{lem}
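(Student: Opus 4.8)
The plan is to treat the two pairs separately, reducing in each case the uniqueness of the Hausdorff midpoint inside $\S_n$ to the defining feature of $\S_n$: a ball-body equals the intersection of all unit balls containing it. Throughout write $d=\|x-y\|_2$; we may assume $d>0$, the case $x=y$ being trivial. Note that singletons and translates of $B_2^n$ lie in $\S_n$ (for instance $\{x\}+(-x+B_2^n)=B_2^n$), and that in both cases the Minkowski average $M=\tfrac12K_0+\tfrac12K_1$ satisfies $\delta(K_0,M)=\delta(K_1,M)=\tfrac12\delta(K_0,K_1)$ by the scaling identity recorded above; so only the uniqueness clause of $\S_n$-cuteness needs to be proved.

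For the pair $(\{x\},\{y\})$ we have $\delta(\{x\},\{y\})=d$ and $M=\{\tfrac{x+y}{2}\}$. Suppose $N\in\S_n$ satisfies $\delta(\{x\},N)=\delta(\{y\},N)=\tfrac d2$. Since $\delta(\{p\},N)$ is exactly the least $r$ with $N\subseteq p+rB_2^n$, this gives $N\subseteq(x+\tfrac d2B_2^n)\cap(y+\tfrac d2B_2^n)$; but these two balls of radius $\tfrac d2$ have centres at distance $d$, so they meet only at $\tfrac{x+y}{2}$, whence $N=M$.

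For the pair $(x+B_2^n,\,y+B_2^n)$ we have $\delta(x+B_2^n,y+B_2^n)=d$, $M=\tfrac{x+y}{2}+B_2^n$, and we set $u_0=\tfrac{y-x}{d}\in\partial B_2^n$. Suppose $N\in\S_n$ satisfies $\delta(x+B_2^n,N)=\delta(y+B_2^n,N)=\tfrac d2$. Unpacking these gives $N\subseteq(x+(1+\tfrac d2)B_2^n)\cap(y+(1+\tfrac d2)B_2^n)$ as well as $x+B_2^n\subseteq N+\tfrac d2B_2^n$ and $y+B_2^n\subseteq N+\tfrac d2B_2^n$; equivalently, on support functions, $\max(\iprod{x}{u},\iprod{y}{u})+1-\tfrac d2\le h_N(u)\le\min(\iprod{x}{u},\iprod{y}{u})+1+\tfrac d2$ for all $u\in\partial B_2^n$. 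The key step is to locate two boundary points of $N$: since $\iprod{y-x}{u_0}=d$, the lower and upper bounds on $h_N$ coincide at $u=u_0$ and at $u=-u_0$, pinning down $h_N(\pm u_0)$ exactly; and if $z\in N$ attains $h_N(u_0)$ then $z$ lies in both balls $x+(1+\tfrac d2)B_2^n$ and $y+(1+\tfrac d2)B_2^n$, whose centres differ by $du_0$, which forces the part of $z$ orthogonal to $u_0$ to vanish and hence $z=z_+:=x+(1+\tfrac d2)u_0$. Symmetrically the face of $N$ in direction $-u_0$ is the single point $z_-:=x-(1-\tfrac d2)u_0$. (Geometrically: $y+u_0\in y+B_2^n$ must lie within $\tfrac d2$ of $N\subseteq x+(1+\tfrac d2)B_2^n$, and the balls $x+(1+\tfrac d2)B_2^n$ and $(y+u_0)+\tfrac d2B_2^n$ are externally tangent, at $z_+$; likewise for $z_-$, using $x-u_0$.)

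To finish, note that $\|z_+-z_-\|_2=2$ and $\tfrac{z_++z_-}{2}=\tfrac{x+y}{2}$, so $N$ contains the segment $[z_-,z_+]$ of length $2$. Being a summand of $B_2^n$, $N$ sits inside some unit ball, and since $N\in\S_n$ it equals the intersection of all unit balls containing it; but any unit ball $p+B_2^n$ that contains a chord of length $2$ has that chord as a diameter, forcing $p=\tfrac{z_-+z_+}{2}=\tfrac{x+y}{2}$. Hence $M=\tfrac{x+y}{2}+B_2^n$ is the only unit ball containing $N$, so $N=M$. The one genuine obstacle is the key step — showing that $\S_n$-cuteness forces the two extreme points $z_\pm$ into $N$; once a length-$2$ segment is exhibited in $N$, the ``intersection of unit balls'' characterization of $\S_n$ does the rest.
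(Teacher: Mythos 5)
Your proof is correct, but for the second pair it takes a genuinely different route from the paper. For $(\{x\},\{y\})$ you argue exactly as the paper does: any midpoint is trapped in $\bigl(x+\tfrac d2 B_2^n\bigr)\cap\bigl(y+\tfrac d2 B_2^n\bigr)=\{\tfrac{x+y}{2}\}$. For $(x+B_2^n,y+B_2^n)$, however, the paper simply observes that $x+B_2^n=\{x\}^c$ and $y+B_2^n=\{y\}^c$, and invokes Lemma~\ref{lem:cuteness-and-isometry} with the bijective isometry $K\mapsto K^c$ to transport cuteness from the point pair to the ball pair --- a two-line argument. You instead give a direct, self-contained computation: the two support-function inequalities pin down $h_N(\pm u_0)$ exactly, Cauchy--Schwarz against the enclosing balls $x+(1+\tfrac d2)B_2^n$ and $y+(1+\tfrac d2)B_2^n$ forces the supporting points in directions $\pm u_0$ to be the specific points $z_\pm$, and then the fact that a member of $\S_n$ containing a chord of length $2$ is contained in a unique unit ball (whose diameter that chord must be) finishes the identification $N=\tfrac{x+y}{2}+B_2^n$. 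All steps check out, including the implicit claim that any $N\in\S_n$ equals the intersection of \emph{all} unit balls containing it. What the paper's approach buys is brevity and a reusable principle (isometries preserve cuteness, applied to $c$-duality, exactly as in Corollary~\ref{cor:both-are-lens}); what yours buys is independence from the $c$-duality formalism and from Lemma~\ref{lem:cuteness-and-isometry}, at the cost of a longer support-function argument.
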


\begin{rem}
    As the proof shall demonstrate, a pair of points $(\{x\},\{y\})$ is not only $\S_n$-cute but in fact $\K^n$-cute (with the natural definition - this is Schneider's \cite{Schneider1975} definition of ``ein zentriertes Paar''). However, it is easy to check that a pair of unit balls $(x+B_2^n, y+B_2^n), x\neq y$ is not $\K^n$-cute. 
\end{rem}

\begin{proof}[Proof of Lemma~\ref{lem:geod1}]
	For convenience we denote $x$ instead of $\{x\}$. Let $x,y \in \RR^n$ be given and denote $d = \delta(x,y) = \|x-y\|_2$.
    If $K\in \K^n$ satisfies
    \[
        \delta (x, K) = \frac{d}2 \qquad {\rm and} \qquad \delta (y, K) = \frac{d}2 ,
    \]
    then $K\subseteq \left( x + \frac{d}{2} B_2^n \right) \cap \left( y + \frac{d}{2} B_2^n \right) = \frac{x+y}{2}$. 
    This shows that $(x,y)$ is an $\S_n$-cute pair. 
	
    To show that $(x+B_2^n, y+B_2^n)$ forms an $\S_n$-cute pair we use Lemma \ref{lem:cuteness-and-isometry} with the bijective isometry $K\mapsto K^c$. Since $x^c = x+B_2^n$ and $y^c = y + B_2^n$ they must be an $\S_n$-cute pair. 
\end{proof}

We next show that when the distance between two bodies in $\S_n$ is not small, the pairs discussed in Lemma \ref{lem:geod1} are actually the only  $\S_n$-cute pairs.

\begin{prop}\label{prop:cute-far-are-balls-and-points}
	Assume $(K_0,K_1) \in \S_n \times \S_n$ is an $\S_n$-cute pair at distance $\delta(K_0, K_1) \ge 4$. Then either both $K_0,K_1$ are two points or  $K_0,K_1$ are two Euclidean unit balls. 
\end{prop}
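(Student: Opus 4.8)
The plan is to describe all $\S_n$-midpoints of the pair, reduce the statement to showing that $M:=\tfrac12(K_0+K_1)$ is either a point or a unit ball, and then eliminate the remaining possibility by perturbing $M$. Write $h_i=h_{K_i}$, $\phi=h_0-h_1$, $d=\delta(K_0,K_1)=\|\phi\|_\infty\ge 4$. A body $N$ has $\delta(N,K_0)\le\tfrac d2$ and $\delta(N,K_1)\le\tfrac d2$ iff $g\le h_N\le G$ on $\partial B_2^n$, where $g:=\max(h_0,h_1)-\tfrac d2$, $G:=\min(h_0,h_1)+\tfrac d2$; by the triangle inequality such $N$ is automatically an equidistant midpoint. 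Since $h_M=\tfrac12(h_0+h_1)$ always lies between $g$ and $G$, the pair is $\S_n$-cute exactly when $M$ is the only body of $\S_n$ with $g\le h_N\le G$. Note $G-g=d-|\phi|\ge 0$ vanishes exactly on the closed set $Z:=\{u:|\phi(u)|=d\}$, so every midpoint agrees with $M$ on $Z$. Throughout I use the standard description: for a convex body $K$, one has $K\in\S_n$ iff $0\le Q_{h_K}\le\Id$ on $\partial B_2^n$, where $Q_h:=\nabla^2h+h\,\Id$ is the operator for which $h$ is a support function iff $Q_h\ge 0$ (in particular $h_K$ is then $C^{1,1}$); and $h_{K^c}(u)=1-h_K(-u)$, hence $Q_{h_{K^c}}(u)=\Id-Q_{h_K}(-u)$.

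\emph{Reduction to $M$.} Suppose it is shown that $M$ must be a point or a unit ball. If $M$ is a point then both $K_0,K_1$ are points (a Minkowski sum is a singleton only if both summands are). If $M=z+B_2^n$ then $h_0+h_1=2+\iprod{2z}{\cdot}$, so for every $u$ the widths satisfy $w_{K_0}(u)+w_{K_1}(u)=(h_0+h_1)(u)+(h_0+h_1)(-u)=4$; since each $K_i$ lies in a unit ball, $w_{K_i}\le 2$, forcing $w_{K_0}\equiv w_{K_1}\equiv 2$, and a body of $\S_n$ of constant width $2$ is a unit ball (its circumradius is then $1$, and $h_{K_i}(u)\le 1+\iprod{c}{u}$ together with $h_{K_i}(u)+h_{K_i}(-u)=2$ forces equality for all $u$). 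Thus it suffices to prove: \emph{if $d\ge 4$, $M$ is not a point and $M$ is not a unit ball, then $(K_0,K_1)$ is not $\S_n$-cute.}

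\emph{The tight set $Z$ is small.} First, $Z$ has empty interior: if $|\phi|\equiv d$ on an open cap $W$ then, $W$ being connected, $\phi\equiv\pm d$ there, say $h_0=h_1+d$ on $W$; differentiating gives $Q_{h_0}=Q_{h_1}+d\,\Id$ on $W$, impossible since $Q_{h_0}\le\Id$ while $Q_{h_1}+d\,\Id\ge 4\,\Id$. Next, at a.e.\ point of $Z$ one has $\nabla\phi=0$, and on $\{\phi=d\}$, using $Q_{h_0}\le\Id$ and $Q_{h_1}\ge0$,
\[
\nabla^2\phi=Q_{h_0}-Q_{h_1}-\phi\,\Id\le(1-d)\,\Id\le-3\,\Id ,
\]
so the second-order Taylor expansion shows that a.e.\ $u_0\in Z$ is isolated in $Z$; hence $Z$ is, up to a null set, a set of isolated points and so has measure zero. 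Consequently $U:=\partial B_2^n\setminus Z$ is open, dense, of full measure, and the slack $s:=\tfrac12(d-|\phi|)$ is continuous and strictly positive on $U$.

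\emph{Perturbation — the main obstacle.} It is enough to produce a nonzero continuous $\psi$ with $\supp\psi$ a compact subset of $U$ such that $h_M+\eps\psi$ is the support function of a body of $\S_n$ for all small $\eps>0$: since $h_M=g=G$ on $Z$ and $h_M-g,\,G-h_M\ge\min_{\supp\psi}s>0$, for $\eps$ with $\eps\|\psi\|_\infty<\min_{\supp\psi}s$ the resulting body is an $\S_n$-midpoint of $(K_0,K_1)$ distinct from $M$. By the description of $\S_n$, admissibility of $h_M+\eps\psi$ localizes to $0\le Q_{h_M}+\eps\,Q_\psi\le\Id$ on $\supp\psi$, so one must move the field $Q_{h_M}$ inside $[0,\Id]$ over a small cap of $U$; this is the step I expect to be the crux. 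The obstruction is that $M$ might be rigid over $U$, i.e.\ $\partial M$ osculates a unit sphere there ($Q_{h_M}\equiv\Id$ on $U$), in which case no bump $\psi$ supported in a cap can keep $Q_{h_M}+\eps Q_\psi\le\Id$ near the edge of $\supp\psi$; but from $Q_{h_M}=\tfrac12Q_{h_0}+\tfrac12Q_{h_1}$ and $Q_{h_i}\le\Id$, the equality $Q_{h_M}=\Id$ forces $Q_{h_0}=Q_{h_1}=\Id$, so over each component of $U$ both $h_0,h_1$ solve $\nabla^2h+h=1$ and equal $1+\iprod{a_i}{\cdot}$ there; since $U$ is dense this propagates globally and makes $K_0,K_1$, hence $M$, unit balls, against our assumption. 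When $M$ is not so rigid one selects $u^*\in U$ where $\partial M$ is strictly more curved than the unit sphere in some direction (taking into account any crease behaviour of $\partial M$) and builds a bump $\psi$ whose $Q_\psi$ has exactly the mixed signature needed — raising principal radii that are $0$, lowering those below $1$ — so that $0\le Q_{h_M}+\eps Q_\psi\le\Id$ on a small cap; carrying this out rigorously in the measure sense, since support functions of $\S_n$-bodies are only $C^{1,1}$, together with the companion bookkeeping for $M^c$, is the technical heart of the proof.
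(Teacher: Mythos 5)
Your setup is sound as far as it goes: the description of all $\S_n$-midpoints as the bodies $N\in\S_n$ with $g\le h_N\le G$, the reduction of the Proposition to showing that $M=\tfrac12(K_0+K_1)$ is a point or a unit ball, and the argument that the tight set $Z$ has measure zero are all correct (the width argument in the unit-ball case is a nice touch). The fatal problem is the step you defer as ``the technical heart'': the dichotomy ``either $Q_{h_M}\equiv\Id$ on $U$, forcing balls, or one can build a compactly supported bump $\psi$ with $0\le Q_{h_M}+\eps Q_\psi\le\Id$'' is false. There are bodies of $\S_n$ that are neither points nor unit balls and are nevertheless locally rigid in $\S_n$, i.e.\ admit no nonzero compactly supported admissible perturbation --- and a proper lens is exactly such a body. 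On the two spherical caps of a proper lens one has $Q_{h_M}=\Id$, so an admissible $\psi$ supported in the corresponding region of normals must satisfy $Q_\psi\le 0$ on its support and $Q_\psi=0$ off it; then $-\psi$ is (after $1$-homogeneous extension) the support function of a convex body vanishing outside a small cap, which is impossible, since a nonnegative support function that is not identically zero is strictly positive on an open half-sphere. On the band of normals seeing the edge, $Q_{h_M}$ has a zero eigenvalue (the edge is an $(n-2)$-sphere of radius $<1$), which blocks perturbation from the other side. This is not a corner case: by Lemma~\ref{lem:av-is-lens} and Corollary~\ref{cor:both-are-lens}, a far $\S_n$-cute pair necessarily has $M$ equal to a lens and $K_0,K_1$ opposite translates of it (so $Z$ is two antipodal points and your slack $s$ is positive everywhere else); the proper-lens case is precisely the one your perturbation must eliminate, and it cannot.

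The paper escapes this by not perturbing $M$ at all: in Lemma~\ref{lem:nocuteles} the competing midpoints for two far translates of a proper lens are \emph{global} replacements --- the much smaller ``$1$-lens'' $[-y,y]^{cc}$ (for $n\ge 3$, or $n=2$ with $y$ a smooth boundary point), or the strictly larger intersection $P$ of two balls of radius $\|z\|_2$ (for $n=2$ with $y$ singular). To salvage your route you would have to replace the local bump by such non-perturbative competitors, and for that you would first need the structural facts that $M$ is a lens and that $K_0,K_1$ are its opposite translates (Lemmas~\ref{lem:av-is-lens} and~\ref{lem:average-cap}, Corollary~\ref{cor:both-are-lens}), which your proposal does not establish; at that point you have essentially rediscovered the paper's argument.
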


The proof will make use of  several lemmas. For these, we define a \textit{lens} to be a nonempty instersection of two balls of radius 1. A lens is \textit{proper} if it is neither a point nor a ball. The first lemma says that a (far enough) $\S_n$-cute pair must have a lens as its midpoint.

\begin{lem}\label{lem:av-is-lens}
	Assume $K_0,K_1\in \S_n$ are an $\S_n$-cute pair with $\delta(K_0,K_1)\ge 4$.  Then $\frac12 (K_0+K_1)$ is a lens. 
\end{lem}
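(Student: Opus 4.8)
Set $2r=\delta(K_0,K_1)\ge 4$ (so $r\ge 2$) and $M=\tfrac12(K_0+K_1)\in\S_n$; we may assume $n\ge 2$, since every element of $\S_1$ is an interval and hence a lens. First I would rephrase cuteness as a uniqueness statement for support functions: for $N\in\K^n$ one has $\delta(K_0,N)\le r$ and $\delta(K_1,N)\le r$ exactly when
\[
h_-:=\max\{h_{K_0},h_{K_1}\}-r\ \le\ h_N\ \le\ \min\{h_{K_0},h_{K_1}\}+r=:h_+
\]
on $\partial B_2^n$. As $\delta(K_0,K_1)=2r$, we have $h_-\le h_+$, with equality exactly on the (nonempty, closed) set $\Sigma:=\{u:|h_{K_0}(u)-h_{K_1}(u)|=2r\}$. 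On $\Sigma$ every admissible $N$ must satisfy $h_N=h_M=\tfrac12(h_{K_0}+h_{K_1})$, and since $|h_M-h_{K_i}|=r$ there, any $N$ with $h_-\le h_N\le h_+$ automatically has $\delta(K_i,N)=r$; off $\Sigma$ the gap $h_+-h_-=2r-|h_{K_0}-h_{K_1}|$ is strictly positive. Hence cuteness says precisely that $M$ is the only body in $\S_n$ whose support function lies in the ``tube'' $[h_-,h_+]$, and it remains to show that otherwise $M$ is a lens.

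Next I would locate $\Sigma$. Since elements of $\S_n$ are intersections of unit balls, $\diam K_i\le 2$; fix $c_1\in K_1$ and a unit ball $v_0+B_2^n\supseteq K_0$. For $u\in\Sigma_0:=\{h_{K_0}-h_{K_1}=2r\}$ we get $2r\le h_{K_0}(u)-h_{K_1}(u)\le(\langle v_0,u\rangle+1)-\langle c_1,u\rangle$, so $\langle v_0-c_1,u\rangle\ge 2r-1>0$; together with the easy bound $\|v_0-c_1\|\le 2r+3$ this puts $\Sigma_0$ in a spherical cap about $\tfrac{v_0-c_1}{\|v_0-c_1\|}$ of angular radius $\le\arccos\tfrac{2r-1}{2r+3}<\tfrac\pi2$, and symmetrically $\Sigma_1:=\{h_{K_1}-h_{K_0}=2r\}$ in such a cap. (This is where $\delta\ge 4$ enters.) Two caps of angular radius $<\tfrac\pi2$ cannot cover $\partial B_2^n$, so $\Sigma\subseteq\Sigma_0\cup\Sigma_1$ omits a nonempty open set $V$; consequently the ``pinned'' part $\{p_M(u):u\in\Sigma\}$ of $\partial M$ — where $p_M(u)$ is the boundary point of $M$ with outer normal $u$, unique by Fact~\ref{fact:unique} — is a proper, closed subset of $\partial M$, and $\partial M$ has points whose outer normals lie in $V$.

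Now suppose $M$ is not a lens: it is not an intersection of two unit balls, though it does equal the intersection of all unit balls containing it. From this I would build a competitor. The key point to establish is that, \emph{because $M$ is not a lens}, there is a unit ball $w+B_2^n$ with $M\not\subseteq w+B_2^n$ for which the removed region $M\setminus(w+B_2^n)$ is an arbitrarily small cap of $M$ whose closure is disjoint from the pinned part and all of whose outer normals lie in $V$ — a ``free'' clip away from $\Sigma$. Granting this, put $N:=M\cap(w+B_2^n)$: then $N\in\S_n$ (an intersection of unit balls), $N\ne M$, $h_N\le h_M\le h_+$, and $h_N=h_M$ on $\Sigma$ (the clip spares the pinned part); moreover $h_+-h_-$ is bounded below by some $\eta>0$ near the normal direction of the clip, so a clip small enough to keep $h_N\ge h_M-\eta$ ensures $h_N\ge h_-$ throughout. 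Thus $N$ is admissible and $N\ne M$, contradicting cuteness; hence $M$ is a lens. (It may help to note that $c$-duality, an isometry of $\S_n$, sends the midpoint of $(K_0,K_1)$ to that of $(K_0^c,K_1^c)$ — cf.\ Lemmas~\ref{lem:cuteness-and-isometry}--\ref{lem:geod1} and $h_{M^c}(u)=1-h_M(-u)$ — so ``cute'' and ``having a lens midpoint'' are $c$-self-dual, which lets one normalize, e.g., which of $\Sigma_0,\Sigma_1$ is nonempty.)

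The main obstacle is exactly the emphasized claim: converting ``$M$ is not a lens'' into the existence of a small, tube-respecting excision with normals confined to $V$. One must argue that a non-lens in $\S_n$ genuinely admits such a localized clip — morally, that some supporting unit ball of $M$ can be perturbed so as to delete only a small cap near a smooth boundary point whose normal lies in $V$ — and then control how far this $\S_n$-preserving modification propagates over the sphere, matching it against the size of the positive gap $h_+-h_-$ available off $\Sigma$, which is macroscopic precisely because $r\ge 2$. The curvature-at-least-one rigidity of $\S_n$-bodies (every boundary point sits on a containing unit ball) is what should make this possible, and is also what must be checked does not, conversely, force a small clip to remove too much.
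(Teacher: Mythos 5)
Your reduction of cuteness to a ``tube'' $h_-\le h_N\le h_+$ of support functions and your localization of the pinched set $\Sigma$ in two caps of angular radius $<\pi/2$ are both correct, but the proof stops exactly where the work is: the emphasized claim --- that a non-lens $M\in\S_n$ admits a unit-ball clip $N=M\cap(w+B_2^n)\ne M$ whose effect on $h_M$ is confined to a small set of directions $V$ disjoint from $\Sigma$ --- is not established, and as stated it is false. The obstruction is the curvature-one degeneracy you flag at the end but do not resolve. If $\partial M$ contains a relatively open piece of a unit sphere $\partial(c+B_2^n)$ (as happens for every $M\in\S_n$ that is a finite intersection of unit balls), then for $w=c+\eps v$ the removed sliver $M\setminus(w+B_2^n)$ is contained in $\{x:\iprod{x-c}{v}<\eps/2\}$ and, while thin in Hausdorff distance, it meets that spherical piece in nearly a full half-sphere's worth of boundary; correspondingly the set of outer normals $u$ with $p_M(u)\ne p_N(u)$ sweeps out (up to $O(\eps)$) an entire closed hemisphere $\{u:\iprod{u}{v}\le 0\}$. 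Since $\Sigma$ consists of two caps that need not fit inside any single open hemisphere, you cannot in general choose $v$ so that the affected normals avoid $\Sigma$, and there is no way to shrink the affected set of directions by shrinking $\eps$: a unit ball cannot ``nibble'' a small cap of normals from a body whose boundary locally has curvature exactly one. So the competitor $N$ is not produced, and the contradiction argument does not close.

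The paper goes in the opposite direction and thereby sidesteps this entirely: rather than shrinking $M$, it \emph{enlarges} it. Taking $u$ and $w$ to be maximizers of $h_{K_1}-h_{K_0}$ and $h_{K_0}-h_{K_1}$ (your two caps each contain one of these), the support points of $K_0$, $M$, $K_1$ in each of these two directions are collinear, and the lens $L=(x_M(u)-u+B_2^n)\cap(x_M(w)-w+B_2^n)$ formed by the two supporting unit balls of $M$ at those points contains $M$, hence satisfies $K_0\cup K_1\subseteq L+\tfrac d2 B_2^n$; conversely $L$ sits inside a single unit ball centered at $x_M(u)-u$, which (using $d\ge 4$, so $\tfrac{d_0}{2}-1\le\tfrac d2-1$) lies inside $x_{K_0}(u)+\tfrac d2 B_2^n\subseteq K_0+\tfrac d2B_2^n$, and similarly for $K_1$ via $w$. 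Thus $L$ is itself a midpoint, and cuteness forces $M=L$. If you want to salvage your shrinking strategy you would need a genuinely different excision mechanism than intersecting with one more unit ball; I would instead adopt the enlargement argument, which uses only the two extremal directions you already identified.
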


\begin{proof}
	Let $(K_0,K_1)$ be an $\S_n$-cute pair with $d := \delta(K_0,K_1) \geq 4$. Clearly $M = \frac12 (K_0+K_1)$ is at distance $\frac{d}{2}$ from both $K_0$ and $K_1$. 
    Pick $u \in \partial B_2^n$ which maximizes $h_{K_1}(u)-h_{K_0}(u)$ and $w \in \partial B_2^n$ which maximizes $h_{K_0}(w) - h_{K_1}(w)$. Denote $d_0 = h_{K_1}(u) - h_{K_0}(u)$ and $d_1 = h_{K_0}(w) - h_{K_1}(w)$. Note that $d = \max\{d_0,d_1\}$, and additionally, $d_0,d_1 > 0$ since if $d_0 \leq 0$ then $K_1 \subseteq K_0$, but as $K_0$ is contained in some unit ball, this contradicts $d \geq 4$. A similar reasoning gives $d_1 > 0$.

    Denote by $x_{K_0}(u), x_M(u), x_{K_1}(u)$ the unique points of $K_0, M, K_1$ respectively that have $u$ as a normal (these are unique by Fact~\ref{fact:unique}). We claim that $x_{K_0}(u), x_{K_1}(u)$ and $x_M(u)$ must be aligned, that is, 
	\[
        x_{K_0}(u) + \frac{d_0}{2} u =  x_M(u) = x_{K_1}(u) - \frac{d_0}{2} u .
    \]
	Indeed, since $u$ is a maximizer of $h_{K_1} - h_{K_0}$ on $\partial B_2^n$, by the Lagrange multipliers theorem we get that $\nabla (h_{K_1} - h_{K_0})(u)  = \lambda u$ for some constant $\lambda$. Since $\nabla h_{K_i} (u) = x_{K_i}(u)$ we see that $x_{K_1}(u) - x_{K_0}(u) = \lambda u$ but taking scalar product with $u$ we find that $d_0 = h_{K_1}(u) - h_{K_0}(u) = \lambda$, and since the support set of the average is the average of the support sets, we are done. Similarly we get
    \[
        x_{K_0}(w) - \frac{d_1}{2} w = x_M(w) = x_{K_1}(w) + \frac{d_1}{2} w ,
    \]
    where $x_{K_0}(w), x_M(w), x_{K_1}(w)$ are the unique points of $K_0,M,K_1$ respectively with normal $w$.
	
	Next we intersect the balls tangent to $M$ at $x_M(u)$ and $x_M(w)$, i.e., we construct the lens
	\[
        L = (x_M(u)-u + B_2^n) \cap (x_M(w)-w+B_2^n) .
	\]
	Since $M \in \S_n$, we have $M \subseteq L$. Therefore, since $\delta(M,K_0) = \delta(M,K_1) = \frac{d}{2}$, we have that $K_0 \cup K_1 \subseteq M + \frac{d}{2} B_2^n \subseteq L + \frac{d}{2} B_2^n$.
	
	On the other hand, since $0 < d_0 \leq d$ and $d \geq 4$, $\parens*{\frac{d_0}{2} - 1}u \in \parens*{\frac{d}{2} - 1} B_2^n$ and so $(\frac{d_0}{2} - 1)u + B_2^n \subseteq \frac{d}{2} B_2^n$. As $x_M(u) = x_{K_0}(u) + \frac{d_0}{2} u$, we have
    \[
        L \subseteq x_M(u) - u + B_2^n = x_{K_0}(u) + \parens*{\frac{d_0}{2} - 1} u + B_2^n \subseteq x_{K_0}(u) + \frac{d}{2} B_2^n \subseteq K_0 + \frac{d}{2} B_2^n .
    \]
    Similarly we can show that $L \subseteq x_M(w) - w + B_2^n \subseteq K_1 + \frac{d}{2} B_2^n$. We thus see that the lens $L$ is indeed in distance $\frac{d}{2}$ from both $K_0$ and $K_1$. Since the pair $(K_0,K_1)$ was assumed to be $\S_n$-cute, we conclude that $M=L$, which proves that the Minkowski average of $K_0$ and $K_1$ is a lens.
\end{proof}

It is quite hard for a Minkowski average of two bodies in $\S_n$ to be a lens. In fact, the lemmas below show that the two bodies averaged must be translations of the same lens in opposite directions. We prove this in two steps. The first step appears in the paper \cite{AF-perrint}, and we include the short proof here for completeness. The proof does use two non trivial facts about the class $\S_n$ and the $c$-transform, which are proved in \cite{AF-perrint}. 

\begin{lem}\label{lem:average-cap}
	Let $K_0, K_1\in\S_n$ and set $M = \frac12(K_0+K_1)$. Assume that $B_2^n$ is a supporting ball of $M$, and that $A\subseteq \partial M$ is a spherically convex subset of the sphere $\partial B_2^n$. Then there exists $x_0 \in \RR^n$ with $A+x_0\subseteq \partial K_0$ and $A-x_0 \subseteq \partial K_1$.
\end{lem}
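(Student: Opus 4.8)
The plan is to reduce the lemma to showing that a single vector does not vary over $A$, and then to play the outer tangent unit balls of $K_0$ and of $K_1$ against each other. First I would fix $a\in A$ and observe that $x_M(a)=a$: since $B_2^n$ is a supporting ball of $M$ and $a\in\partial M\cap\partial B_2^n$, the hyperplane tangent to $\partial B_2^n$ at $a$ supports $M$, so $a$ is an outer normal of $M$ at $a$, and by Fact~\ref{fact:unique} (applied to $M\in\S_n$) $a$ is the unique boundary point of $M$ with that normal. Because the support set of a Minkowski average is the average of the support sets, and $\{a\}$ is a singleton, the support sets $F(K_0,a)$ and $F(K_1,a)$ are singletons as well, say $\{x_{K_0}(a)\}$ and $\{x_{K_1}(a)\}$, and $x_{K_0}(a)+x_{K_1}(a)=2a$. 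Writing $x_0(a):=x_{K_0}(a)-a$, so that $x_{K_1}(a)-a=-x_0(a)$, the lemma reduces to the claim that $x_0(a)$ is independent of $a\in A$ (then one takes $x_0:=x_0(a)$ for any $a$, and $A+x_0=\{x_{K_0}(a):a\in A\}\subseteq\partial K_0$, $A-x_0=\{x_{K_1}(a):a\in A\}\subseteq\partial K_1$).

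The one external ingredient is the ``outer tangent ball'' property of bodies in $\S_n$ (one of the two facts imported from \cite{AF-perrint}): for $K\in\S_n$ and $u\in\partial B_2^n$ we have $x_K(u)-u\in K^c$, equivalently $K\subseteq(x_K(u)-u)+B_2^n$. This follows by writing $K=\bigcap_{y\in K^c}(y+B_2^n)$, picking $y^\ast\in K^c$ minimizing $\langle\,\cdot\,,u\rangle$, and noting that $h_{y^\ast+B_2^n}(u)=\langle y^\ast,u\rangle+1=h_K(u)$ forces $x_K(u)=y^\ast+u$. Applied to $K_0$ and $K_1$ in each direction $a\in A$, this yields
\[
K_0\subseteq x_0(a)+B_2^n \qquad\text{and}\qquad K_1\subseteq -x_0(a)+B_2^n .
\]

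Now I would fix $a,b\in A$ and set $v:=x_0(a)-x_0(b)$. Since $x_{K_0}(a)=a+x_0(a)\in K_0\subseteq x_0(b)+B_2^n$ and $x_{K_0}(b)=b+x_0(b)\in K_0\subseteq x_0(a)+B_2^n$, we get $\|a+v\|_2\le1$ and $\|b-v\|_2\le1$, which after squaring and using $\|a\|_2=\|b\|_2=1$ become $2\langle a,v\rangle+\|v\|_2^2\le0$ and $-2\langle b,v\rangle+\|v\|_2^2\le0$; adding them gives $\langle a-b,v\rangle\le-\|v\|_2^2$. Repeating with $K_1$ (using $x_{K_1}(a)=a-x_0(a)$, $x_{K_1}(b)=b-x_0(b)$, and $K_1\subseteq-x_0(a)+B_2^n$, $K_1\subseteq-x_0(b)+B_2^n$) gives $\|a-v\|_2\le1$, $\|b+v\|_2\le1$, hence $\langle a-b,v\rangle\ge\|v\|_2^2$. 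Therefore $\|v\|_2^2\le-\|v\|_2^2$, so $v=0$ and $x_0$ is constant on $A$, completing the proof.

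I expect the routine computation at the end to be easy; the delicate points are the opening reduction (verifying $x_M(a)=a$ and the splitting of the support set of $M$ into two singletons via Fact~\ref{fact:unique}) and correctly quoting the outer tangent ball fact from \cite{AF-perrint}. I would also note in passing that spherical convexity of $A$ is not used in this argument --- only $a,b\in A\subseteq\partial M\cap\partial B_2^n$ --- so it serves here as a convenient hypothesis for how the lemma is later applied rather than something the proof needs.
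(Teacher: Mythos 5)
Your proof is correct, and it takes a genuinely different route from the paper's. The opening reduction is the same in both: for $a\in A$ one has $x_M(a)=a$, the support sets of $K_0,K_1$ in direction $a$ are singletons averaging to $a$, and the task is to show that $x_0(a)=x_{K_0}(a)-a$ does not depend on $a$. From there the paper imports two facts from \cite{AF-perrint} --- that $x_{K_i}(a)-a\in\partial K_i^c$ and that the $c$-transform commutes with Minkowski averages, so $M^c=\frac12(K_0^c+K_1^c)$ --- and argues by contradiction: if $x_0(a)\neq x_0(b)$, then $0\in\partial M^c$ is the midpoint of two distinct points of $M^c$, so a normal to $M^c$ at $0$ supports $M^c$ at more than one point, contradicting Fact~\ref{fact:unique} applied to $M^c$. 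You instead use only the outer tangent ball property $K\subseteq (x_K(u)-u)+B_2^n$, which you correctly re-derive from $K^{cc}=K$ and $h_{K^c}(u)=1-h_K(-u)$ (the minimum of $\iprod{\cdot}{u}$ over the compact set $K^c$ equals $h_K(u)-1$, and the supporting ball centered at the minimizer pins down $x_K(u)$). The four resulting inclusions give $\iprod{a-b}{v}\le-\|v\|_2^2$ from $K_0$ and $\iprod{a-b}{v}\ge\|v\|_2^2$ from $K_1$, forcing $v=0$ directly. What your version buys is self-containment: it dispenses entirely with the commutation theorem (Theorem~1.19 of \cite{AF-perrint}), one of the two nontrivial imported ingredients, and replaces the contradiction argument with a two-line computation. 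What the paper's version buys is that, once the $c$-transform calculus is available, the whole argument localizes at the single point $0\in\partial M^c$ and reads off the conclusion from strict convexity there. Your closing observation that spherical convexity of $A$ is never used applies equally to the paper's proof; it is indeed only relevant to how the lemma is invoked later.
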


\begin{proof}
    Let the notations be as in the statemente of the Lemma. Since $A\subseteq \partial M \cap \partial B_2^n$, and $M \subseteq B_2^n$ by assumption, every $u \in A$ is also the unique point of $M$ that has $u$ itself as a normal. By Fact~\ref{fact:unique} and by the definition of $M = \frac{K_0 + K_1}{2}$, every $u\in A$ can be written uniquely as $u = \frac{\chi_0(u) + \chi_1(u)}{2}$ where $\chi_0(u)$
    is the unique point on $K_0$ where $u$ is the normal, and $\chi_1(u)$
    is the unique point on $K_1$ with $u$ as the normal.
	
	Since $K_0, K_1\in \S_n$, we know that $\chi_0(u)-u \in \partial K_0^c$ and $\chi_1(u) - u \in \partial K_1^c$  (see~\cite[Lemma 1.24]{AF-perrint}). 
	Moreover, as $\chi_0(u) +\chi_1(u) = 2u$, we get that $\chi_0(u)-u = -(\chi_1(u)-u)$.
	Using the fact that the $c$-transform commutes with averages (see~\cite[Theorem 1.19]{AF-perrint}), $M^c = \frac12(K_0^c + K_1^c)$, and we see that for all $u,v\in A$ it hold that $\frac{\chi_0(u) - u + \chi_1(v)-v}{2}\in M^c$, which can equivalently be written as $\frac{\chi_0(u) - u - \chi_0(v)+v}{2}\in M^c$. 
	
	Assume by way of contradiction that for some $u,v\in A$ we have that $\chi_0(u)-u\neq \chi_0(v)-v$. Then all four averages of $\chi_0(u)-u$ or $\chi_0(v)-v$ with $\chi_1(u)-u = u-\chi_0(u)$ or $\chi_1(v)-v= v-\chi_0(v)$ lie in $M^c$, namely 
	\[
          0, \frac{\chi_0(u)-\chi_0(v)+v - u}{2}, \frac{\chi_0(v)-\chi_0(u) -u + v}{2} \in M^c .
    \]
    Since $B_2^n$ is a supporting ball of $M$, $0 \in \partial M^c$, and so there is some $\nu \in \partial B_2^n$ normal to $M^c$ at 0. As $0$ is the average of the points $\frac{\chi_0(u) - \chi_0(v) + v - u}{2}$ and $\frac{\chi_0(v) - \chi_0(u) - u + v}{2}$, we have that $\nu$ is normal to $M^c$ at $\frac{\chi_0(u) - \chi_0(v) + v - u}{2}$ as well, in contradiction to the fact that every direction has a single supporting point in $M^c \in \S_n$.
	
	We conclude that the points $\chi_0(u)$ are all of the form $x_0 +u$ for some fixed $x_0 \in \RR^n$, which means that $x_0 + A \subseteq \partial K_0$ and similarly we get  that $-x_0 + A \subseteq \partial K_1$.
\end{proof}

\begin{cor}\label{cor:both-are-lens}
	Let $K_0, K_1\in\S_n$ and assume $M = \frac12(K_0+K_1)$ is a lens. Then there exists $x_0\in \RR^n$ with $K_0 = M+x_0 $ and $K_1  = M-x_0$.
\end{cor}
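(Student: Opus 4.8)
The plan is to reduce the statement to Lemma~\ref{lem:average-cap} by covering the boundary of the lens $M$ by finitely many (in fact two) spherically convex pieces. First I would invoke the structure of a lens: since $M=\frac12(K_0+K_1)$ is a lens, it is the intersection of two unit balls $z_1+B_2^n$ and $z_2+B_2^n$, so $\partial M$ is the union of two spherical caps $A_1\subseteq \partial(z_1+B_2^n)$ and $A_2\subseteq\partial(z_2+B_2^n)$, meeting along a common $(n-2)$-dimensional ``rim''. After translating $M$ so that one of these balls becomes $B_2^n$ (note that translating $M$ corresponds to translating one summand, which preserves the conclusion up to adjusting $x_0$), we have that $B_2^n$ is a supporting ball of $M$ and that $A_1\subseteq\partial M\cap\partial B_2^n$ is a spherical cap, hence spherically convex. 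Lemma~\ref{lem:average-cap} then yields a vector $x_0\in\RR^n$ with $A_1+x_0\subseteq\partial K_0$ and $A_1-x_0\subseteq\partial K_1$.

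Next I would upgrade this from the cap $A_1$ to all of $M$. The key point is that $K_0\supseteq$ (convex hull of $A_1+x_0$) and, since $A_1+x_0$ spans the same spherical cap on the translated ball $(z_1+x_0)+B_2^n$, its convex hull together with the supporting ball structure forces $K_0$ to contain a translate of the ball $z_1+B_2^n$ intersected with the relevant half-space; more directly, since $K_0\in\S_n$ and $A_1+x_0\subseteq\partial K_0$ lies on the sphere $\partial((z_1+x_0)+B_2^n)$, and $A_1$ is a cap subtending more than a hemisphere's worth of normals whenever the lens is proper, $K_0$ is forced to contain $(M+x_0)$; the reverse inclusion comes from applying the same argument with the roles of the two caps, or by a volume/support-function comparison. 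In the degenerate cases where $M$ is a point or a ball the statement is immediate (a point: $K_0,K_1$ are points and $M$ their midpoint; a ball $z+B_2^n$: then $h_{K_0}+h_{K_1}=2h_{z+B_2^n}$ with both $h_{K_i}\le h_{\text{some unit ball}}$, forcing $K_i=z+B_2^n$ after recentering).

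A cleaner way to finish, which I would actually prefer, is to run Lemma~\ref{lem:average-cap} twice: once with the cap $A_1$ (after recentering so $z_1+x_0\mapsto$ origin ball) giving a shift $x_0$, and once with the cap $A_2$ giving a shift $x_0'$. On the rim $A_1\cap A_2$ both conclusions apply, so $A_1\cap A_2 + x_0\subseteq\partial K_0$ and $A_1\cap A_2+x_0'\subseteq\partial K_0$; since a nonempty set and two distinct translates of it cannot both lie in $\partial K_0$ for $K_0\in\S_n$ (again by uniqueness of support points, Fact~\ref{fact:unique}, applied to the common normal directions along the rim), we get $x_0=x_0'$. Hence $\partial M+x_0=(A_1\cup A_2)+x_0\subseteq\partial K_0$, and since $\partial M+x_0=\partial(M+x_0)$ bounds the convex body $M+x_0\in\S_n$ while $K_0\in\S_n$ has $\partial(M+x_0)\subseteq\partial K_0$, we conclude $K_0=M+x_0$; symmetrically $K_1=M-x_0$.

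The main obstacle I anticipate is the gluing step: justifying rigorously that a convex body in $\S_n$ whose boundary contains the full boundary of another body in $\S_n$ must equal it — i.e.\ that $\partial(M+x_0)\subseteq\partial K_0$ implies $M+x_0=K_0$. This is intuitively clear (two closed convex bodies sharing their entire boundaries coincide, since $\partial$ determines the body as the closure of the bounded component of its complement), but one must be careful that $A_1\cup A_2$ really is all of $\partial M$ and that the two invocations of Lemma~\ref{lem:average-cap} are set up with compatible recenterings so that the shift vectors can be meaningfully compared; keeping track of which translate of $M$ one is working with is where the bookkeeping could go wrong.
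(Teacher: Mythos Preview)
Your outline follows the paper's argument almost exactly: handle the degenerate cases (point, ball) separately, then for a proper lens write $\partial M=A_1\cup A_2$ as two spherical caps, apply Lemma~\ref{lem:average-cap} to each to obtain shifts $x_1,x_2$ with $A_i+x_i\subseteq\partial K_0$ and $A_i-x_i\subseteq\partial K_1$, and finally glue by showing $x_1=x_2$ using a rim point. So the strategy is right.

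The genuine gap is in your gluing step. You write that ``a nonempty set and two distinct translates of it cannot both lie in $\partial K_0$\dots\ by uniqueness of support points, Fact~\ref{fact:unique}, applied to the common normal directions along the rim''. But the two applications of Lemma~\ref{lem:average-cap} do \emph{not} give you common normal directions at a rim point $z_0$. After the appropriate recentering, the lemma tells you that $z_0+x_1\in\partial K_0$ with outer normal $z_0-z$ (the normal coming from the first sphere), while $z_0+x_2\in\partial K_0$ with outer normal $z_0+z$ (from the second sphere). Since $z\neq 0$ these are different directions, so Fact~\ref{fact:unique} alone does not force $z_0+x_1=z_0+x_2$. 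Two distinct boundary points with distinct normals are perfectly compatible with $K_0\in\S_n$.

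The paper closes this gap with a short support-function computation: pick any $u$ in the normal cone of $M$ at $z_0$ (the arc between $z_0-z$ and $z_0+z$). Since $z_0+x_1\in K_0$ and $z_0-x_1\in K_1$, one has
\[
h_M(u)=\langle z_0,u\rangle=\tfrac12\langle z_0+x_1,u\rangle+\tfrac12\langle z_0-x_1,u\rangle\le \tfrac12 h_{K_0}(u)+\tfrac12 h_{K_1}(u)=h_M(u),
\]
so equality holds and $z_0+x_1$ must be \emph{the} point of $K_0$ with normal $u$. The identical argument with $x_2$ shows $z_0+x_2$ is also the point of $K_0$ with normal $u$, and now Fact~\ref{fact:unique} gives $x_1=x_2$. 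Once you have this, the conclusion $K_0=M+x_1$ follows immediately because you now know the supporting point of $K_0$ in every direction, so your worry about ``$\partial(M+x_0)\subseteq\partial K_0\Rightarrow K_0=M+x_0$'' dissolves: you actually get $h_{K_0}=h_{M+x_1}$ directly.
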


\begin{proof}
    Let $K_0,K_1 \in \S_n$ and $M = \frac12 (K_0 + K_1)$. Consider first the case where $M$ is a point or a ball of radius 1. If $M = \{y\}$ is a point, we immediately conclude that $K_0,K_1$ must also be points, and that there exists $x_0 \in \RR^n$ such that $K_0 = \{y+x_0\}, K_1 = \{y-x_0\}$. If $M = y + B_2^n$ for some $y \in \RR^n$, using Lemma~\ref{lem:cuteness-and-isometry} with the $c$-duality isometry we see that $\{y\} = (y + B_2^n)^c = \frac12 K_0^c + \frac12 K_1^c$, which implies that there is some $x_0 \in \RR^n$ such that $K_0^c = \{y+x_0\}$ and $K_1^c = \{y-x_0\}$, and therefore $K_0 = y+B_2^n + x_0$ and $K_1 = y+B_2^n - x_0$. 
    Thus we henceforth assume that $M$ is neither a point nor a ball of radius 1.
    
	Otherwise, since $M$ is a lens, we may assume $M = (z + B_2^n) \cap (-z + B_2^n)$ so that $A_1 = \partial M \cap \{x: \iprod{x}{z}\le 0\}$ and $A_2 = \partial M\cap \{x: \iprod{x}{z}\ge 0\}$ are caps on its boundary. By Lemma \ref{lem:average-cap} there are some $x_1, x_2 \in \RR^n$ with $A_1 + x_1 \subseteq 
	\partial K_0$,  $A_1 - x_1 \subseteq 
	\partial K_1$,  $A_2 + x_2 \subseteq 
	\partial K_0$,  $A_2 - x_2 \subseteq 
	\partial K_1$.
	We will show that $x_1 = x_2$, which implies $K_0 = M + x_1$ and $K_1 = M - x_1$.

	Take a point $z_0\in \partial M$ 
	which belongs to both caps, and so its normal cone includes both $z_0-z$ and $z_0 + z$. 
	This point $z_0$ is the average of $z_0 +x_1 \in K_0$ and $z_0 - x_1 \in K_1$, and also the average of $z_0 + x_2 \in K_0$ and $z_0 - x_2 \in K_1$. 
	
	The normal cone to $M$ at $z_0$ constitutes of all normals in the spherical convex hull of $z_0-z$ and $z_0+z$ (i.e. an arc). Take some $u$ which belongs to this convex hull.
	
	Note that 
	\begin{align*}
        h_M(u) &= \iprod{z_0}{u} = \iprod{\frac{1}{2}((z_0 + x_1)+ (z_0 - x_1))}{u} \\
        &= \frac12 \iprod{z_0+x_1}{u} + \frac12 \iprod{z_0-x_1}{u} .
    \end{align*}
    If the point in $K_0$ that has $u$ as a normal is not $z_0 + x_1$, say $y \in K_0$, then $h_{K_0}(u) > \iprod{z_0+x_1}{u}$ and so $\frac12 h_{K_0}(u) + \frac12 h_{K_1}(u) > h_M(u)$, which is a contradiction to $M = \frac12(K_0+K_1)$. Therefore $z_0 + x_1$ is the unique point in $K_0$ where $u$ is the normal. The same reasoning applies to $x_2$, so that $z_0 + x_2$ is once again the unique point of $K_0$ where $u$ is  a normal, which shows $x_1 = x_2$. This shows that $K_0$ and $K_1$ are opposite translates of $M$. 
\end{proof}

\begin{lem}\label{lem:nocuteles}
    Let $x,z\in \RR^n$ with $\|x\|_2 \leq 1 < \|z\|_2$ and denote $L= (B_2^n+x) \cap (B_2^n-x)$. The pair $( L+z, L -z) $ is an $\S_n$-cute pair if and only if either %$z = 0$, or 
	$x = 0$, or $\|x\|_2 = 1$. In other words, two (far enough) translates of a lens are $\S_n$-cute if and only if they are either singletons or unit balls. 
\end{lem}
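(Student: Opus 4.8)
The plan is to analyze when the pair $(L+z, L-z)$ can have a midpoint in $\S_n$ other than $L$ itself. Note that $\frac12((L+z)+(L-z)) = L$, so $L$ is always one midpoint. By Corollary~\ref{cor:both-are-lens}, if the only $\S_n$-midpoint is $L$ then we are in the cute situation; the task is to decide when a \emph{competing} midpoint exists. The key geometric object will be the ``flat'' part of $\partial L$: when $0 < \|x\|_2 < 1$, the lens $L = (B_2^n+x)\cap(B_2^n-x)$ is proper, and its boundary consists of two spherical caps meeting along an $(n-2)$-dimensional ``equator'' — a lower-dimensional sphere of radius $r = \sqrt{1-\|x\|_2^2} < 1$, lying in the hyperplane $\iprod{\cdot}{x}=0$. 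This equatorial rim is precisely the region of $\partial L$ where the lens fails to be smooth, and it is what allows room to construct a different midpoint.

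\textbf{Main steps.} First I would record that since $\delta(L+z, L-z) = 2\|z\|_2$ and we are in the ``far enough'' regime (the hypothesis lets us assume $\|z\|_2$ large, or we check $\ge 2$ directly), any $\S_n$-midpoint $M$ satisfies $\delta(L\pm z, M) = \|z\|_2$, hence $L + z, L - z \subseteq M + \|z\|_2 B_2^n$ and $M \subseteq (L+z+\|z\|_2 B_2^n)\cap(L-z-\|z\|_2 B_2^n)$. Next, for the two extreme cases: if $x=0$ then $L = B_2^n$ and $(B_2^n+z, B_2^n-z)$ is $\S_n$-cute by Lemma~\ref{lem:geod1}; if $\|x\|_2 = 1$ then $L = \{0\}$ is a point and $(\{z\},\{-z\})$ is $\S_n$-cute again by Lemma~\ref{lem:geod1}. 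So the content is the converse: assuming $0 < \|x\|_2 < 1$, exhibit a midpoint $M' \neq L$ in $\S_n$. The idea is to perturb $L$ by sliding along the lens axis: consider $M' = (B_2^n + x + tv)\cap(B_2^n - x + tv)$ where $v = z/\|z\|_2$ and $t$ is a small parameter, or more robustly, take $M'$ to be the intersection of $L$ with a pair of balls tangent to the equatorial rim but tilted so that the resulting body still lies at distance exactly $\|z\|_2$ from both $L\pm z$. One clean choice: replace one of the two defining balls $B_2^n+x$ by a unit ball $B_2^n + x'$ with $x'$ chosen on the circle through $x$ in the $2$-plane $\operatorname{span}(x,v)$, so that the equator is preserved but the cap is rotated; this keeps the new body in $\S_n$, keeps it containing enough of $L$ to be within $\|z\|_2$ of both translates, yet makes it $\neq L$. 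I would verify the two Hausdorff inclusions using the containment $M' \subseteq (\text{ball of radius }1)$ on one side and the explicit rim-containment on the other, exactly in the style of the proof of Lemma~\ref{lem:av-is-lens}.

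\textbf{The main obstacle.} The delicate point is producing an \emph{honest} competitor $M'$: one must ensure simultaneously that (i) $M' \in \S_n$ (so it is genuinely an intersection of unit balls — this forces $M'$ to be a lens or smaller), (ii) $M' \neq L$ (this is why $0 < \|x\|_2 < 1$ is needed — in the degenerate cases there is simply no room to wiggle since a point has no boundary to deform and a unit ball's only sub-lens fixing the distance constraints is itself), and (iii) $\delta(M', L+z) = \delta(M', L-z) = \|z\|_2$ exactly, not just $\le$. Constraint (iii) is the real work: it requires that $M'$ both contains the ``core segment'' that $L$ contributes along direction $v$ (so the distance does not drop below $\|z\|_2$) and is contained in the appropriate shrunken region (so it does not exceed $\|z\|_2$). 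I expect the cleanest route is to show that the set of bodies at distance $\|z\|_2$ from both $L+z$ and $L-z$, intersected with $\S_n$, is exactly $\{L' : [\,-z/\|z\|_2 \cdot 0\,] \subseteq L' \subseteq L\}$-type family — more precisely all $\S_n$-bodies sandwiched between a minimal ``lens-core'' and $L$ — and then observe this family is a single point iff the core equals $L$, which happens precisely when $L$ is a point or a ball. The bookkeeping of which sub-lenses of $L$ remain within distance $\|z\|_2$ of $L-z$ is where I'd expect to spend most of the effort, but it is routine once the equatorial-rim picture is set up.
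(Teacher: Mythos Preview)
Your overall strategy --- show the pair is $\S_n$-cute in the extreme cases via Lemma~\ref{lem:geod1}, and for $0<\|x\|_2<1$ exhibit a second $\S_n$-midpoint $M'\neq L$ --- matches the paper exactly. The gap is that none of your concrete candidates for $M'$ actually works, and the proposal never gets past the heuristic stage.

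Your first suggestion, $M'=(B_2^n+x+tv)\cap(B_2^n-x+tv)$, is simply $L+tv$; then $\delta(L+tv,\,L+z)=\|z\|_2-t$ and $\delta(L+tv,\,L-z)=\|z\|_2+t$, so it is not a midpoint unless $t=0$. Your ``clean choice'' of replacing $B_2^n+x$ by $B_2^n+x'$ with $\|x'\|_2=\|x\|_2$ and $x'\in\operatorname{span}(x,v)$ does \emph{not} preserve the equatorial rim: the intersection of the spheres $\partial(B_2^n+x')$ and $\partial(B_2^n-x)$ lies in the hyperplane $\{\iprod{\cdot}{\,x'+x}=0\}$, not in $\{\iprod{\cdot}{x}=0\}$, so the geometric picture you rely on collapses, and there is no reason the resulting lens should sit at distance exactly $\|z\|_2$ from both $L\pm z$. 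Finally, your concluding ``sandwich'' heuristic --- that all $\S_n$-midpoints lie between some core and $L$ --- is not what you need and is not what you would find if you carried it out; competitors need not be contained in $L$.

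The paper avoids all this by a single explicit construction: let $y\in\partial L$ be the unique point with outward normal $v=z/\|z\|_2$ and take $M=[-y,y]^{cc}$. Because $\pm y\in M\subseteq L$, the containment $M\subseteq L\pm z+\|z\|_2B_2^n$ is immediate, and the reverse containment $L\pm z\subseteq M+\|z\|_2B_2^n$ follows from $L\pm z\subseteq y\pm(\|z\|_2-1)v+B_2^n\subseteq \pm y+\|z\|_2B_2^n$ (using $\|z\|_2>1$). One then checks $M\neq L$ whenever $L$ is a proper lens (with a short separate argument when $n=2$ and $y$ is a corner of $L$, where $[-y,y]^{cc}=L$). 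This is the missing idea: pick the competitor aligned with the direction $v$, not with the lens axis $x$, so that the two Hausdorff inclusions become one-line ball containments rather than a perturbative calculation.
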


\begin{proof}
	When $x=0$ or $\|x\|_2=1$ we are in the two cases discussed in Lemma \ref{lem:geod1}, and the pairs are $\S_n$-cute. 
	We thus assume $L$ is a proper lens, namely $x\neq 0$ and $\|x\|_2<1$.
	
    Since $\|x\|_2 \leq 1$, $L$ is nonempty. Consider the unique point $y$ on $\partial L$ which has $z/\|z\|_2$ as a normal. By symmetry, $-y\in \partial L$ is the unique point where the normal is in direction $-z/\|z\|_2$. Note that $y - \frac{1}{\|z\|_2}z + B_2^n \supseteq L$.

    We split the proof to cases based on the dimension $n$ and the point $y \in \partial L$. If $n \geq 3$, or $n = 2$ and $y$ is a smooth point of $\partial L$, the set $M = [-y,y]^{cc} \subsetneq L$. Indeed, if $n \geq 3$ then $M$ has exactly two non-smooth points $\{\pm y\}$, and cannot equal $L$, which has a continuum of non-smooth points; and if $n = 2$ and $y$ is a smooth point of $\partial L$ then again, $y$ is a non-smooth point of $M$ while it is a smooth point of $L$ and thus they cannot be equal. For a more detailed description of various lenses and their duals, see~\cite[Section 7.1]{AF-perrint}.
    Therefore $M \subseteq L+z+\|z\|_2B_2^n$ and $M \subseteq L-z+\|z\|_2B_2^n$. However, since $L\in \S_n$ and $\|z\|_2 > 1$, we see that
    \[
        y + \|z\|_2B_2^n \supseteq y+\parens*{1-\frac1{\|z\|_2}} z + B_2^n \supseteq L+z .
    \]
    Therefore (as $y\in M$) $L+z \subseteq M+\|z\|_2B_2^n$ and similarly $L-z \subseteq M + \|z\|_2B_2^n$, using the ball of radius $\|z\|_2$ centered at $-y$. We conclude that $\delta(M, L+z) = \delta(M, L-z) = \|z\|_2$, and so $M$ is a midpoint of $L+z$ and $L-z$. Since $L$ itself is also a midpoint, the pair $(L+z,L-z)$ is not $\S_n$-cute.

    If $n = 2$ and $y$ is a singular point of $\partial L$, the set
    \[
        P = ((-y+z)+\|z\|_2B_2^n)\cap ((y-z)+\|z\|_2B_2^n) \supsetneq L .
    \]
    Indeed, since $\delta(L,L-z) = \|z\|_2$ we have $L \subseteq y-z + \|z\|_2B_2^n$, and similarly $L \subseteq -y+z + \|z\|_2B_2^n$. Additionally, $L$ is an intersection of two balls of radius 1 and therefore cannot equal $P$, an intersection of two balls of larger radius.
    Thus $P + \|z\|_2B_2^n \supseteq L+z,L-z$. Since $-y+z \in L+z$ and $y-z \in L-z$ we also have $L+z+\|z\|_2B_2^n \supseteq P$ and $L-z+\|z\|_2B_2^n \supseteq P$. Therefore $\delta(P,L+z) = \delta(P,L-z) = \|z\|_2$, so again $P$ and $L$ are two distinct midpoints of $L+z$,$L-z$, which means the pair $(L+z,L-z)$ is not $\S_n$-cute.
\end{proof}

With the above ingredients we prove that an $\S_n$-cute pair of bodies with distance at least $4$ has to be either two balls or two points. 

\begin{proof}[Proof of Proposition \ref{prop:cute-far-are-balls-and-points}]
    Let an $\S_n$-cute pair $(K_0,K_1)$ be given and assume that $\delta(K_0,K_1) \geq 4$. By Lemma \ref{lem:av-is-lens}, their Minkowski average is a lens, and by Corollary \ref{cor:both-are-lens} they are both translates of the same lens. However by Lemma \ref{lem:nocuteles} two translates of a   lens which are in distance at least $4$ are $\S_n$-cute if and only if they are either singletons or balls. 
\end{proof}

Putting together Proposition \ref{prop:cute-far-are-balls-and-points} and Lemma \ref{lem:cuteness-and-isometry} we get the following. 

\begin{lem}\label{lem:point_rigidity}
	If $T:\S_n \to \S_n$ is a bijective isometry then either all points are mapped to points, or all points are mapped to unit balls.
\end{lem}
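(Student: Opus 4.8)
The plan is to combine the two cited facts — that far-apart $\S_n$-cute pairs are rigid (Proposition~\ref{prop:cute-far-are-balls-and-points}) and that $T$ preserves both $\S_n$-cuteness and distances (Lemma~\ref{lem:cuteness-and-isometry}) — with a short connectivity argument that exploits the unboundedness of $\RR^n$.

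First I would fix an arbitrary point $\{x_0\}$ and choose some point $\{x_1\}$ with $\|x_0-x_1\|_2\ge 4$. By Lemma~\ref{lem:geod1} the pair $(\{x_0\},\{x_1\})$ is $\S_n$-cute, and since $\delta(\{x_0\},\{x_1\})=\|x_0-x_1\|_2\ge 4$, Lemma~\ref{lem:cuteness-and-isometry} gives that $(T\{x_0\},T\{x_1\})$ is again $\S_n$-cute, and, $T$ being an isometry, at distance $\ge 4$ as well. Proposition~\ref{prop:cute-far-are-balls-and-points} then forces $T\{x_0\}$ and $T\{x_1\}$ to be either both points or both unit balls; in particular $T\{x_0\}$ is a point or a unit ball, and I fix which of the two cases occurs.

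Next I would propagate this to an arbitrary point $\{y\}$. If $y=x_0$ there is nothing to prove. Otherwise I pick a point $\{z\}$ with $\|z-x_0\|_2\ge 4$ and $\|z-y\|_2\ge 4$, which certainly exists in $\RR^n$ (take $z$ far enough from both, e.g. far in a direction not parallel to $y-x_0$). Applying the previous paragraph to the $\S_n$-cute pair $(\{x_0\},\{z\})$, the pair $(T\{x_0\},T\{z\})$ is two points or two unit balls, so $T\{z\}$ has the same type as $T\{x_0\}$; applying it once more to $(\{z\},\{y\})$ shows $T\{y\}$ has that same type. Hence all points are mapped to points, or all points are mapped to unit balls, according to which case held for $\{x_0\}$.

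I do not expect a genuine obstacle here: the only points to be careful about are that Proposition~\ref{prop:cute-far-are-balls-and-points} indeed excludes any possibility other than ``point'' or ``unit ball'' for an element of a far $\S_n$-cute pair (so that the dichotomy for $T\{x_0\}$, and then $T\{z\}$ and $T\{y\}$, is exhaustive), and that intermediate points $\{z\}$ far from both endpoints are always available, which is immediate in $\RR^n$. No splitting into the cases $n=1$ and $n\ge 2$ is needed.
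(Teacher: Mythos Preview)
Your proposal is correct and follows essentially the same route as the paper: use Lemma~\ref{lem:geod1} to get that far-apart point pairs are $\S_n$-cute, push this through $T$ via Lemma~\ref{lem:cuteness-and-isometry}, apply Proposition~\ref{prop:cute-far-are-balls-and-points} to force the image pair to be two points or two balls, and then propagate the type to all points via intermediate ``two jumps of length $\ge 4$.'' The paper's proof is just a terser version of exactly this argument.
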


\begin{proof}
    A pair of points $x,y$ in distance at least $4$ is $\S_n$-cute by Lemma \ref{lem:geod1}. Since $T$ is assumed to be a bijective isometry, by Lemma \ref{lem:cuteness-and-isometry} they must be mapped to an $\S_n$-cute pair $Tx, Ty$ of the same distance, which by Proposition \ref{prop:cute-far-are-balls-and-points} means $Tx,Ty$ are either a pair of points or a pair of Euclidean unit balls. 
    We may then reach any other point by two jumps of length $4$ or more, so we see that all points are mapped to points or all points are mapped to balls. 	
\end{proof}

Finally, we will use the following simple fact, that a body can be reconstructed once we know its Hausdorff distance from all points in $\RR^n$. 

\begin{lem}\label{lem:representations}
	For any $K\in \S_n$,
	\begin{eqnarray*}
		K = \bigcap_{x\in \RR^n} (x + \delta (x, K) B_2^n) .
	\end{eqnarray*}
\end{lem}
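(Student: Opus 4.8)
The plan is to prove the two inclusions separately, with the non-trivial one being $\bigcap_{x\in\RR^n}(x+\delta(x,K)B_2^n)\subseteq K$. First I would unwind the definition of the Hausdorff distance between a singleton $\{x\}$ and $K$: the requirement $K\subseteq\{x\}+\lambda B_2^n$ forces $\lambda\ge\|x-y\|_2$ for every $y\in K$, so $\delta(x,K)\ge\max_{y\in K}\|x-y\|_2$ (in fact equality holds, since the farthest-point distance automatically dominates $\mathrm{dist}(x,K)$, but only this one inequality is needed). Consequently $K\subseteq x+\delta(x,K)B_2^n$ for every $x\in\RR^n$, and intersecting over all $x$ gives $K\subseteq\bigcap_{x\in\RR^n}(x+\delta(x,K)B_2^n)$ for free.

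For the reverse inclusion I would argue by contrapositive. Let $p\notin K$. This is the one place the hypothesis $K\in\S_n$ is used: by definition $K$ is an intersection of unit Euclidean balls, say $K=\bigcap_{\alpha}(z_\alpha+B_2^n)$ (equivalently $K=\bigcap_{z\in K^c}(z+B_2^n)$, using $K^{cc}=K$). Since $p\notin K$, there is an index $\alpha$ with $p\notin z_\alpha+B_2^n$, i.e. $\|p-z_\alpha\|_2>1$, whereas $K\subseteq z_\alpha+B_2^n$ yields $\delta(z_\alpha,K)=\max_{y\in K}\|z_\alpha-y\|_2\le 1$. Hence $\|p-z_\alpha\|_2>\delta(z_\alpha,K)$, so $p\notin z_\alpha+\delta(z_\alpha,K)B_2^n$ and therefore $p\notin\bigcap_{x\in\RR^n}(x+\delta(x,K)B_2^n)$, as required.

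There is essentially no serious obstacle here: beyond the definition of $\delta$, the only input is the structural fact that a member of $\S_n$ is an intersection of unit balls, so whenever $p\notin K$ the body $K$ is "pinched" inside a unit ball whose centre $z_\alpha$ is at distance more than $1$ from $p$. (Incidentally the identity holds for an arbitrary $K\in\K^n$ as well — push $x$ to infinity along the ray from $p$ through its nearest point of $K$ — but for our purposes the $\S_n$ case with its one-line argument is all we need.)
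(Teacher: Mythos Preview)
Your proof is correct and follows essentially the same approach as the paper: both arguments obtain the easy inclusion directly from the definition of $\delta$, and for the reverse inclusion both use that $K\in\S_n$ means $K=\bigcap_{z\in K^c}(z+B_2^n)$ (i.e.\ $K^{cc}=K$), with the centers $z\in K^c$ being precisely those $x$ satisfying $\delta(x,K)\le 1$. The only cosmetic difference is that the paper writes this as a chain of set inclusions ending in $K^{cc}=K$, whereas you phrase it as a pointwise contrapositive picking a single witness $z_\alpha\in K^c$.
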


\begin{proof}
	Let $K \in \S_n$. By definition of the Hausdorff distance,
    \begin{equation}\label{eq:intersection}
        K \subseteq \bigcap_{x \in \RR^n} (x + \delta(x,K) B_2^n) \subseteq \bigcap_{x \in \RR^n: \delta(x,K) \leq 1} (x + B_2^n) .
    \end{equation}
    Note that for any $x \in \RR^n$, $\delta(x, K) = \inf \{\lambda > 0: K \subseteq x + \lambda B_2^n\}$ since the other inclusion then holds immediately. Thus $x\in K^c$ if and only if $\delta(x, K) \le 1$, so
    \[
        \bigcap_{x \in \RR^n: \delta(x,K) \leq 1} (x + B_2^n) = \bigcap_{x \in K^c} (x+B_2^n) = (K^c)^c = K ,
    \]
    and so in Equation~\eqref{eq:intersection} equality holds throughout.
\end{proof}

We are finally ready to prove Theorem~\ref{thm:main}.

\begin{proof}[Proof of Theorem~\ref{thm:main}]
    Let $T: \S_n \to \S_n$ be a bijective isometry. By Lemma~\ref{lem:point_rigidity} either all points are mapped to points, or all points are mapped to unit balls. In the first case, consider $R = T$, and in the second, consider $R: \S_n \to \S_n$ given by $RK = (TK)^c$. In either case, $R$ is an isometry of $\S_n$ mapping points to points, so its restriction to points must an isometry of $\RR^n$, and these are well know yo be given by rigid motions $g: \RR^n \to \RR^n$. Thus $Q = g^{-1} \circ R: \S_n \to \S_n$ is an isometry which, restricted to points, is the identity.

    Let $K \in \S_n$. Note that for any $x \in \RR^n$ we have $\delta(QK,x) = \delta(K,x)$, so using Lemma~\ref{lem:representations} twice, we get
    \[
        QK = \bigcap_{x \in \RR^n}(x + \delta(x,QK)B_2^n) = \bigcap_{x \in \RR^n}(x + \delta(x,K)B_2^n) = K .
    \]

    Thus either for all $K\in \S_n$, $K = QK = g^{-1}TK$, in which case $TK = gK$, or for all $K\in \S_n$, $K = QK = g^{-1}(TK)^c$, in which case $TK = (gK)^c = gK^c$. This completes the proof.
\end{proof}

\subsection*{Acknowledgements}
We would like to thank the anonymous referees for their helpful comments and remarks. The first and second named authors are supported in part by the ERC under the European Union’s Horizon 2020 research and innovation programme (grant agreement no. 770127), by ISF grant Number 784/20, and by the Binational Science Foundation (grant no. 2020329).

\bibliographystyle{plain}
\bibliography{ref}

\end{document}